 \documentclass[12pt,{other stuff},reqno,oneside]{amsart}

\numberwithin{equation}{subsection}

   \usepackage{amsmath}       
 \usepackage{amssymb} 
\theoremstyle{plain}  
      \newtheorem{theorem}{Theorem}[section]
      \newtheorem{lemma}[theorem]{Lemma}
      \newtheorem{corollary}[theorem]{Corollary}
      \newtheorem{proposition}[theorem]{Proposition}
      \theoremstyle{definition}

      \usepackage[usenames,dvipsnames]{color}
   \usepackage[pdftex]{graphicx}
  
 \usepackage{colortbl} 
      
  \hoffset -0.3in    
     
   \textwidth 5.7in

\begin{document}
\author{N. Ghroda}
  \address{Department of Mathematics\\University
  of York\\Heslington\\York YO10 5DD\\UK}
\email{ng521@york.ac.uk}

    \title[Bisimple inverse $\omega$-semigroups of left I-quotients]{Bisimple inverse  $\omega$-semigroups of left I-quotients}

   \begin{abstract}A subsemigroup  $S$  of an inverse semigroup  $Q$  is a  left I-order in  $Q$ if every element in  $Q$  can be written as  $a^{-1}b$   where  $a ,b \in S$  and  $a^{-1}$  is the inverse of  $a$  in the sense of inverse semigroup theory. If we insist on  $a$ and  $b$  being  $\mathcal{R}$-related in  $Q$, then we say that  $S$ is a straight left I-order in  $Q$.  We give necessary and sufficient conditions for a semigroup to be a left I-order in a bisimple inverse  $\omega$-semigroup.
   \end{abstract}
   
   \keywords{bisimple inverse $\omega$-semigroup , I-quotients, I-order}

   \date{\today}

   \maketitle
 
\section{Introduction}\label{Sec1}
Many definitions of semigroups of quotients have been proposed and studied. The first, that was specifically tailored to the structure of semigroups was introduced  by Fountain and Petrich in \cite{pjhon}, but was restricted to completely 0-simple semigroups of left quotients. This definition has been extended to the class of all semigroups \cite{bisGould}. The idea is that a subsemigroup  $S$  of a semigroup $Q$  is a \emph{left order} in  $Q$   or  $Q$  is a \emph{semigroup of left quotients} of $S$ if every element of  $Q$  can be written as   $a^{\sharp}b$  where  $a , b \in S$  and  $a^{\sharp}$  is the inverse of   $a$  in a subgroup of  $Q$ and if, in addition, every \emph{square-cancellable} element  (an element  $a$ of a semigroup  $S$ is square-cancellable if  $a\, \mathcal{H}^{*}\,a^{2}$) lies in a subgroup of  $Q$. \emph{Semigroups of right quotients} and \emph{right orders} are defined dually. If  $S$ is both a left order and a right order in a semigroup  $Q$, then  $S$  is an \emph{order} in  $Q$ and $Q$ is a semigroup of \emph{quotients} of $S$. This definition and its dual were used in \cite{bisGould} to characterize semigroups which have  bisimple inverse $\omega$-semigroups of left quotients.\par

\bigskip 
On the other hand, Clifford \cite{clifford} showed that from any right cancellative monoid  $S$ with (LC) there is a bisimple inverse  monoid $Q$ such that  $Q=S^{-1}S$; that is, every element  $q$  in $Q$ can be written as  $a^{-1}b$   where  $a ,b \in S$ and $a^{-1}$ is the inverse of $a$ in $Q$ in the sense of inverse semigroup theory. By saying that a semigroup  $S$ has the (LC) \emph{condition} we mean that for any  $a,b\in S$ there is an element  $c\in S$ such that  $Sa\cap Sb=Sc$.  The author and Gould in \cite{GG} have  extended Clifford's work to a left ample semigroup with (LC) where they  introduced the following definition of left I-orders in inverse semigroups:\par
 \bigskip
 Let  $Q$ be an inverse semigroup. A  subsemigroup  $S$  of  $Q$  is a \emph{left I-order} in  $Q$ or $Q$ is a semigroup of \emph{left I-quotients} of $S$, if every element in  $Q$  can be written as  $a^{-1}b$    where  $a ,b \in S$. The notions of \emph{right I-order} and  \emph{semigroup of right I-quotients} are defined dually. If  $S$ is both a left I-order and a right I-order in  $Q$, we say that  $S$  is an \emph{I-order} in  $Q$ and $Q$ is a semigroup of \emph{I-quotients} of $S$. It is clear that, if $S$ a left order in an inverse semigroup $Q$, then it is certainly a left I-order in $Q$; however, the converse is not true   (see for example \cite{GG} Example 2.2). \par
\bigskip 
A  left I-order $S$ in an inverse semigroup $Q$ is  \emph{straight left I-order}if every element in  $Q$ can be written as  $a^{-1}b$ where  $a,b \in S$ and  $a\,\mathcal{R}\,b$ in  $Q$;  we also say that $Q$ is a \emph{ semigroup of straight left I-quotients} of $S$. If  $S$ is  straight in $Q$,
we have the advantage of controlling  products in $Q$. Many left I-orders are straight, such as left I-orders in  primitive inverse semigroups. In the case where $S$ is a straight left I-order in $Q$, the uniqueness problem has been solved \cite{GG}, that is, the author and Gould have given  necessary and sufficient conditions for a left I-order to have a unique semigroup of a  left I-quotients. \par
\bigskip 
In \cite{onordersGould} it was shown that if $\mathcal{H}$ is a congruence on a regular semigroup $Q$, then every left order $S$ in $Q$ is
straight. To prove this, Gould uses the fact that  $S$  intersects every $\mathcal{H}$-class of $Q$. Since $\mathcal{H}$ is congruence on any
bisimple inverse $\omega$-semigroup,  any left order $S$ in  such a semigroup must be straight. In the case of left I-orders we show that if $S$ is a left I-order in a bisimple inverse $\omega$-semigroup $Q$, then $S$ intersects every $\mathcal{L}$-class of $Q$ and we use this to show that $S$ is straight in $Q$.\par

\bigskip

 The main aim of this article is to study semigroups which have  bisimple inverse $\omega$-semigroups of  left I-quotients. After giving some preliminaries in Section~\ref{Sec2}, in Section~\ref{Sec3} we extend the result in \cite{bisGould}, by introducing the main theorem in this article, which gives necessary and sufficient conditions for a semigroup to be a left I-order in a bisimple inverse  $\omega$-semigroup.

\newpage

\section{Preliminaries}\label{Sec2}
   
  Throughout this article we shall follow the terminology and notation of \cite{clifford}. The set of all non-negative integers will be denoted by $\mathbb{N}^0$. 
 Let $ \mathcal{R} ,  \mathcal{L} ,  \mathcal{H}$ and  $\mathcal{D}=  \mathcal{R} \circ \mathcal{L}=\mathcal{L} \circ \mathcal{R}$\ be the usual Green's relations. A semigroup  $S$ is\emph{ bisimple} if it consists of a single  $\mathcal{D}$-class.\par
  \bigskip 
  For any semigroup  $Q$ with the set of idempotents  $E$ we define a partial ordering  $\leqslant$ on  $E$ by the rule that  $e\leqslant f$ if and only if  $ef=fe=e$. A \emph{bisimple inverse  $\omega$-semigroup} is a bisimple inverse  semigroup whose idempotents form
an  $\omega$-chain; that is,  $E(S)=\{e_{m}: m \in \mathbb{N}^0\}$ where $ e_{0} \geqslant e_{1} \geqslant e_{2} \geqslant \ldots $. Thus if  $S$ is a bisimple inverse  $\omega$-semigroup, on  $E$ we have \[e_{m} \leqslant e_{n}\ \mbox{if and only if} \ m\geqslant n.\]
Reilly \cite{Reilly} determined the structure of all bisimple inverse  $\omega$-semigroups as follows: \\
 
Let  $G$ be a group and let  $\theta$ be an endomorphism of  $G$. Let  $S(G,\theta)$ be the semigroup on $\mathbb{N}^0 \times G\times \mathbb{N}^0$ with multiplication \[(m,g,n)(p,h,q)=\big(m-n+t,(g\theta^{t-n})(h\theta^{t-p}),q-p+t\big)\] where  $t$=max$\{n,p\}$ and  $\theta^0$ is interpreted as the identity map of  $G$. As was shown in \cite{Reilly} (Cf \cite{howie}),  $S(G,\theta)$ is a bisimple inverse  $\omega$-semigroup and every bisimple inverse $\omega$-semigroup is isomomorphic to  $S(G,\theta)$. In the case where  $G$ is trivial, then  $S(G,\theta)=\mathcal{B}$ where  $\mathcal{B}$  the \emph{bicyclic semigroup}, we identify $\mathcal{B}$ with $\mathbb{N}^0 \times \mathbb{N}^0$. The set of idempotents of  $S(G,\theta)$ is  $\{(m,1,m); m \in \mathbb{N}^0\}$ and for any  $(m,g,n)$ in $S(G,\theta)$, \[(m,g,n)^{-1}=(n,g^{-1},m).\]
  For any  $(m,a,n), (p,b,q)\in S(G,\theta)$, 
   \[(m,a,n)\,\mathcal{R}\,(p,b,q)\ \mbox{if and only if} \ m=p,\]
   \[(m,a,n)\,\mathcal{L}\,(p,b,q)\ \mbox{if and only if} \ n=q,\]
   and, consequently, 
   \[(m,a,n)\,\mathcal{H}\,(p,b,q)\ \mbox{if and only if} \ m=p\ \mbox{and}\ n=q.\]
   If  $Q$ is a bisimple inverse $\omega$-semigroup, let  $R_n$ ($L_n$) denote the  $\mathcal{R}$-class ($\mathcal{L}$-class) of  $Q$ containing the idempotent  $e_n=(n,1,n)$. From the above,
   \[R_m=\{(m,a,n): a\in G, n \in \mathbb{N}^0\}, \]
    \[L_n=\{(m,a,n): a\in G, m \in \mathbb{N}^0\}. \]
     Clearly,
     \[\begin{array}{rcl}H_{m,n}=R_m \cap L_n &=& \{(m,a,n): a\in G\}\\ &=&\{ q\in Q:  qq^{-1}=e_m , q^{-1}q=e_n\} \end{array} \]
   and from the multiplication in $S(G,\theta)$, 
     \[H_{m,n}H_{p,q}\subseteq H_{m-n+t,q-p+t},\]
    where  $t=$max$\{n,p\}$.\par
    \bigskip
Let $S$ be any semigroup such that there is a homomorphism $\varphi: S\longrightarrow \mathcal{B}$. We define functions $l,r:S \longrightarrow \mathbb{N}^0$ by  $a\varphi=\big(r(a),l(a)\big)$. We also put $H_{i,j}=(i,j)\varphi^{-1}$, so that $S$ is a disjoint union of subsets of the $H_{i,j}$ and \[H_{i,j}=\{a\in S: r(a)=i,l(a)=j\}.\]

From the above,  $\mathcal{H}$ is a congruence on any bisimple inverse  $\omega$-semigroup  $Q$. Moreover there is a homomorphism  $\overline{\varphi} :Q\longrightarrow \mathcal{B}$  given  by $(m,p,n)\overline{\varphi}=(m,n)$ which is  surjective  with  $Ker \overline{\varphi}=\mathcal{H}$ so  $Q/\mathcal{H}\cong \mathcal{B}$ where  $\mathcal{B}$ is the bicyclic semigroup. As above  we will index   $\mathcal{H}$ in  $Q$ by putting  $H_{i,j}=(i,j)\overline{\varphi}^{-1}$.\par 
\bigskip
Let  $S$ be a left I-order in  $Q$. Let $\varphi=\overline{\varphi}|_S$, then  $\varphi$ is a homomorphism from  $S$ to  $\mathcal{B}$. Unfortunately, this homomorphism is not surjective in general, since  $S$ need not intersect every  $\mathcal{H}$-class of  $Q$. But we can as above index the elements of  $S$.\par
\bigskip
In \cite{NGb} it was shown that, if a semigroup $S$ is a left I-order in a bicyclic semigroup $\mathcal{B}$, then $S$ interesects every $\mathcal{L}$-class of $\mathcal{B}$. Moreover, it is straight. In fact, this is true for any left I-order in a bisimple inverse $\omega$-semigroup, as we will see in the next lemmas.

\begin{lemma}\label{lintersect}
If a semigroup $S$ is a left I-order in a bisimple inverse $\omega$-semogroup $Q$, then $S\cap L_n\neq\emptyset$ for all $n \in \mathbb{N}^0$.\end{lemma}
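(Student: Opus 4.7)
The plan is to reduce the statement to the bicyclic case already settled in \cite{NGb}, by pushing $S$ forward through the surjective homomorphism $\overline{\varphi}:Q\longrightarrow\mathcal{B}$ defined by $(m,g,n)\overline{\varphi}=(m,n)$. Setting $\varphi=\overline{\varphi}|_S$, the first step is to verify that the image $\varphi(S)$, which is a subsemigroup of $\mathcal{B}$, is itself a left I-order in $\mathcal{B}$. Once this is established, the conclusion of the lemma will be almost immediate.

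To see that $\varphi(S)$ is a left I-order in $\mathcal{B}$, fix $(i,j)\in\mathcal{B}$ and pick any preimage $q\in Q$ with $q\overline{\varphi}=(i,j)$, for instance $q=(i,1_{G},j)$. Since $S$ is a left I-order in $Q$, we can write $q=a^{-1}b$ for some $a,b\in S$. The map $\overline{\varphi}$ is a homomorphism between inverse semigroups and so preserves inverses; applying it to $q=a^{-1}b$ yields $(i,j)=\varphi(a)^{-1}\varphi(b)$ in $\mathcal{B}$. As $(i,j)$ was arbitrary, every element of $\mathcal{B}$ is a left I-quotient of elements of $\varphi(S)$, confirming the claim.

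Invoking the result from \cite{NGb}, $\varphi(S)$ must intersect every $\mathcal{L}$-class of $\mathcal{B}$. For a given $n\in\mathbb{N}^{0}$, the $\mathcal{L}$-class of $(0,n)$ in $\mathcal{B}$ consists of all pairs $(i,n)$ with $i\in\mathbb{N}^{0}$, so there exists $s\in S$ with $\varphi(s)=(i,n)$ for some $i$. Writing $s=(i,g,n)$ with $g\in G$, the description of $L_{n}$ given in the preliminaries yields $s\in S\cap L_{n}$, which establishes the lemma.

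The only genuine item to check is the bookkeeping of the first step, namely that $\overline{\varphi}$ transports the left I-order property from $S\subseteq Q$ down to $\varphi(S)\subseteq\mathcal{B}$; this in turn rests entirely on $\overline{\varphi}$ preserving inverses, which is automatic for a homomorphism between inverse semigroups. Apart from this, the argument is a direct appeal to the bicyclic case, so I expect no substantive obstacle.
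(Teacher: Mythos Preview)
Your argument is correct. Pushing $S$ forward along the surjection $\overline{\varphi}:Q\to\mathcal{B}$ does yield a left I-order $\varphi(S)$ in $\mathcal{B}$ (the preservation-of-inverses step is exactly as you say), and then the bicyclic result from \cite{NGb} together with $L_n=\overline{\varphi}^{-1}\bigl(\{(i,n):i\in\mathbb{N}^0\}\bigr)$ gives the conclusion.

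The paper takes a different, entirely self-contained route: it picks $p\in H_{n,n}$, writes $p=a^{-1}b$ with $a\in H_{i,j}$, $b\in H_{k,l}$, and computes
\[
p\in H_{j,i}H_{k,l}\subseteq H_{\,j-i+\max(i,k),\;l-k+\max(i,k)\,},
\]
forcing $n=j-i+\max(i,k)=l-k+\max(i,k)$; since $\max(i,k)$ equals $i$ or $k$, one of $j,l$ must be $n$, so $a\in L_n$ or $b\in L_n$. This is essentially the same arithmetic that underlies the bicyclic case, done in $Q$ directly rather than after projecting. Your reduction is more structural and makes explicit that the lemma is really the bicyclic statement lifted through $\overline{\varphi}$, at the cost of depending on \cite{NGb}; the paper's argument is a three-line computation that stands on its own and incidentally reproves the bicyclic case as well.
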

\begin{proof}
Let $p\in H_{n,n}$, then $p=a^{-1}b$ for some $a,b\in S$ with $a\in H_{i,j}$ and $b\in H_{k,l}$. Hence \[p=a^{-1}b\in H_{j,i}H_{k,l}\subseteq H_{j-i+max(i,k),l-k+max(i,k)},\] and so $n=j-i+max(i,k)=l-k+max(i,k)$. So, as max$(i,k)=i$ or $k$, either $n=j$ or $n=l$. Hence $S\cap L_n\neq \emptyset$.\end{proof}

In \cite{bisGould} it was shown that if $S$ a left order in a bisimple inverse $\omega$-semigroup $Q$, then it is straight. The following lemma extends this to the left I-order in bisimple inverse $\omega$-semigroup.
\begin{lemma}\label{straightlaw}If a semigroup $S$ is a left I-order in a bisimple inverse $\omega$-semigroup $Q$, then $S$ is straight. \end{lemma}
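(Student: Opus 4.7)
The strategy is to start from an arbitrary representation $q = a^{-1}b$ with $a,b\in S$ and manufacture a straight representation by multiplying on the left by a single carefully chosen element of $S$. Using Reilly's model $Q\cong S(G,\theta)$, I would write $a\in H_{i,j}$ and $b\in H_{k,l}$, and set $n=\max(i,k)$. By Lemma \ref{lintersect}, $S\cap L_n\neq\emptyset$, so I can pick $s\in S\cap L_n$, say $s\in H_{m,n}$. The claim to establish is then that $q=(sa)^{-1}(sb)$ with $sa\,\mathcal{R}\,sb$ in $Q$.

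Verifying the claim splits into two direct computations in $S(G,\theta)$. First, $(sa)^{-1}(sb)=a^{-1}s^{-1}sb=a^{-1}e_n b$, since $s^{-1}s=e_n$ is the idempotent in $H_{n,n}$. Because $n=\max(i,k)$, the parameter $t$ appearing in the Reilly product for $a^{-1}b$ equals $n$, and inserting $e_n$ between $a^{-1}$ and $b$ produces exactly the same triple; thus $a^{-1}e_n b=a^{-1}b=q$. Second, since $n\geq i$ and $n\geq k$, applying the multiplication formula to $sa$ and to $sb$ gives $t=n$ in each case, so both products have first coordinate $m-n+n=m=r(s)$. Hence $sa\in H_{m,\,j-i+n}$ and $sb\in H_{m,\,l-k+n}$, and in particular $sa\,\mathcal{R}\,sb$.

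The only real obstacle is choosing $n$ correctly, and this is where the plan is delicate. Making $n$ larger than $\max(i,k)$ shifts the first coordinate of $a^{-1}e_n b$ and destroys the equality $a^{-1}e_n b=a^{-1}b$, while making $n$ smaller than $\max(i,k)$ leaves $sa$ and $sb$ in different $\mathcal{R}$-classes. The value $n=\max(i,k)$ is the unique one that balances both requirements, and Lemma \ref{lintersect} is precisely what guarantees the existence of an $s\in S$ with $l(s)=n$, so the two ingredients fit together exactly.
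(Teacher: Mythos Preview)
Your proof is correct and follows essentially the same route as the paper: pick $n=\max(i,k)$, use Lemma~\ref{lintersect} to obtain $s\in S\cap L_n$, and then verify that $(sa)^{-1}(sb)=a^{-1}e_n b=a^{-1}b$ with $sa\,\mathcal{R}\,sb$. The only difference is cosmetic---the paper phrases the key step as ``$(n,1,n)\,\mathcal{R}\,(t,b,s)$ or $(n,1,n)\,\mathcal{R}\,(i,a,j)$'' rather than your direct appeal to the Reilly product formula---but the underlying computation is identical.
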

\begin{proof}
Let $(h,q,k) \in Q$, then $(h,q,k)=(i,a,j)^{-1}(t,b,s)=(j,a^{-1},i)(t,b,s)$ for some $(i,a,j),(t,b,s)\in S$. Let $n=$max$\{i,t\}$; since $S\cap L_n\neq \emptyset$, by Lemma ~\ref{lintersect}, there exist $(u,c,n)\in S\cap L_n$ and hence $(u,c,n)^{-1}(u,c,n)=(n,1,n)$, so that $(n,1,n)\,\mathcal{R}\,(t,b,s)$ or $(n,1,n)\,\mathcal{R}\,(i,a,j)$. In both cases, we have \[\begin{array}{rcl}(h,q,k)&=&(i,a,j)^{-1}(n,1,n)(t,b,s)\\ &=&(i,a,j)^{-1}(u,c,n)^{-1}(u,c,n)(t,b,s)\\ &=&\big((u,c,n)(i,a,j)\big)^{-1}\big((u,c,n)(t,b,s)\big). \end{array}\]It is clear that 
$(u,c,n)(i,a,j)\,\mathcal{R}\,(u,c,n)(t,b,s)$. Hence $S$ is straight.\end{proof}
 
\begin{proposition}\label{straightprop}Let $Q$ be an inverse semigroup and $q=a^{-1}b$  with $a\,\mathcal{R}\,b$, then $a^{-1}\,\mathcal{R}\,q\,\mathcal{L}\,b$. \end{proposition}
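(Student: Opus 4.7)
The plan is to use the standard characterization of Green's relations in an inverse semigroup: for $x,y\in Q$, we have $x\,\mathcal{R}\,y$ iff $xx^{-1}=yy^{-1}$, and $x\,\mathcal{L}\,y$ iff $x^{-1}x=y^{-1}y$. So the job reduces to two idempotent computations using the hypothesis $aa^{-1}=bb^{-1}$ (which is exactly what $a\,\mathcal{R}\,b$ means in $Q$).

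First I would compute $qq^{-1}$. Since $q=a^{-1}b$, we have $q^{-1}=b^{-1}a$, and then
\[qq^{-1}=a^{-1}bb^{-1}a=a^{-1}(aa^{-1})a=(a^{-1}a)(a^{-1}a)=a^{-1}a,\]
using $bb^{-1}=aa^{-1}$ and the fact that $a^{-1}a$ is an idempotent. Noting that $a^{-1}a=(a^{-1})(a^{-1})^{-1}$, this says $qq^{-1}=(a^{-1})(a^{-1})^{-1}$, and hence $q\,\mathcal{R}\,a^{-1}$.

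Next I would compute $q^{-1}q$ the same way:
\[q^{-1}q=b^{-1}aa^{-1}b=b^{-1}(bb^{-1})b=(b^{-1}b)(b^{-1}b)=b^{-1}b,\]
again using $aa^{-1}=bb^{-1}$ and idempotency. This gives $q\,\mathcal{L}\,b$, completing the proof.

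There is no genuine obstacle here; the whole argument is a two-line manipulation once one recognises that $a\,\mathcal{R}\,b$ is interchangeable with the equation $aa^{-1}=bb^{-1}$. The only thing to be mildly careful about is the direction of the inverse: writing $q^{-1}=b^{-1}a$ (not $a^{-1}b^{-1}\cdot\text{something}$) and remembering that $(a^{-1})^{-1}=a$ so that $(a^{-1})(a^{-1})^{-1}=a^{-1}a$, which is what pairs $q$ with $a^{-1}$ under $\mathcal{R}$.
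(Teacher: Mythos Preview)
Your argument is correct: the two idempotent computations $qq^{-1}=a^{-1}bb^{-1}a=a^{-1}aa^{-1}a=a^{-1}a$ and $q^{-1}q=b^{-1}aa^{-1}b=b^{-1}bb^{-1}b=b^{-1}b$ are exactly what is needed, and the identifications with $(a^{-1})(a^{-1})^{-1}$ and $b^{-1}b$ give the desired Green's relations. The paper itself states this proposition without proof (it is a standard fact about inverse semigroups), so there is nothing to compare against; your write-up would serve perfectly well as the omitted justification.
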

The following corollaries are clear.
\begin{corollary}\label{related}
Let $Q$ be an inverse semigroup. If $a^{-1}b,c^{-1}d \in Q$ where $a\,\mathcal{R}\,b$ and  $c\,\mathcal{R}\,d$, then\\
$(i)$\ $a^{-1}b\,\mathcal{R}\,c^{-1}d  \Longleftrightarrow  a^{-1}a=c^{-1}c$;\\
$(ii)$\ $a^{-1}b\,\mathcal{L}\,c^{-1}d  \Longleftrightarrow  b^{-1}b=d^{-1}d$. \end{corollary}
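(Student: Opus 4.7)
The plan is to reduce both equivalences to well-known characterizations of Green's relations in an inverse semigroup by first stripping the $b$ and $d$ (respectively $a^{-1}$ and $c^{-1}$) from the products $a^{-1}b$ and $c^{-1}d$ using Proposition~\ref{straightprop}. Specifically, since $a\,\mathcal{R}\,b$ and $c\,\mathcal{R}\,d$, that proposition yields
\[
a^{-1}\,\mathcal{R}\,a^{-1}b\,\mathcal{L}\,b \quad\text{and}\quad c^{-1}\,\mathcal{R}\,c^{-1}d\,\mathcal{L}\,d.
\]
These are the only ingredients I will need from the preceding material.

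For part~(i), I will use transitivity of $\mathcal{R}$ on both sides: $a^{-1}b\,\mathcal{R}\,c^{-1}d$ if and only if $a^{-1}\,\mathcal{R}\,c^{-1}$. In an inverse semigroup one has the standard identities $xx^{-1}=$ the idempotent generator of the $\mathcal{R}$-class of $x$, and $(a^{-1})^{-1}=a$, so $a^{-1}\,\mathcal{R}\,c^{-1}$ is equivalent to $a^{-1}(a^{-1})^{-1}=c^{-1}(c^{-1})^{-1}$, i.e.\ $a^{-1}a=c^{-1}c$. Part~(ii) is obtained by the dual argument: transitivity of $\mathcal{L}$ collapses $a^{-1}b\,\mathcal{L}\,c^{-1}d$ to $b\,\mathcal{L}\,d$, which in an inverse semigroup is equivalent to $b^{-1}b=d^{-1}d$.

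Since each direction in each equivalence is immediate once the proposition has been applied, there is no real obstacle here; the only thing worth being careful about is remembering to apply the identities $(a^{-1})^{-1}=a$ and the $\mathcal{R}$/$\mathcal{L}$-characterization via $xx^{-1}$/$x^{-1}x$ in the correct direction so that the idempotents $a^{-1}a$ and $c^{-1}c$ (rather than $aa^{-1}$ and $cc^{-1}$) appear in~(i). This confirms the remark made in the paper that the corollaries are ``clear.''
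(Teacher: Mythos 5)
Your argument is correct and is exactly the route the paper intends: it leaves the corollary unproved as ``clear'' immediately after Proposition~\ref{straightprop}, and your use of that proposition together with transitivity of $\mathcal{R}$ (resp.\ $\mathcal{L}$) and the standard characterizations $x\,\mathcal{R}\,y \Leftrightarrow xx^{-1}=yy^{-1}$ and $x\,\mathcal{L}\,y \Leftrightarrow x^{-1}x=y^{-1}y$ in an inverse semigroup is the intended filling-in of that gap. Nothing further is needed.
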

\begin{corollary}\label{biswrelated}
Let  $Q$ be a bisimple inverse  $\omega$-semigroup, then \\
$(i)$\ $(m,a,n)^{-1}(m,b,t)\,\mathcal{R}\,(i,c,j)^{-1}(i,d,k)$  if and only if $n=j$;\\
$(ii)$\ $(m,a,n)^{-1}(m,b,t)\,\mathcal{L}\,(i,c,j)^{-1}(i,d,k)$  if and only if $t=k$.\end{corollary}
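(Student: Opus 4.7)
The plan is to deduce both equivalences directly from Corollary~\ref{related} applied to the concrete inverse semigroup $Q=S(G,\theta)$, using the descriptions of the Green's relations given in Section~\ref{Sec2}. I first observe that the hypothesis of Corollary~\ref{related} is met: since $(m,a,n)$ and $(m,b,t)$ share the first coordinate $m$, they are $\mathcal{R}$-related in $Q$, and likewise $(i,c,j)\,\mathcal{R}\,(i,d,k)$. So both $(m,a,n)^{-1}(m,b,t)$ and $(i,c,j)^{-1}(i,d,k)$ are of the form $x^{-1}y$ with $x\,\mathcal{R}\,y$, and Corollary~\ref{related} gives us the translations
\[
(m,a,n)^{-1}(m,b,t)\,\mathcal{R}\,(i,c,j)^{-1}(i,d,k) \iff (m,a,n)^{-1}(m,a,n) = (i,c,j)^{-1}(i,c,j),
\]
\[
(m,a,n)^{-1}(m,b,t)\,\mathcal{L}\,(i,c,j)^{-1}(i,d,k) \iff (m,b,t)^{-1}(m,b,t) = (i,d,k)^{-1}(i,d,k).
\]

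Next I compute each of these idempotent products explicitly using the Reilly multiplication and the inverse formula $(m,g,n)^{-1}=(n,g^{-1},m)$. For any $(r,g,s)\in S(G,\theta)$, taking $t=\max\{r,r\}=r$ in the product gives
\[
(r,g,s)^{-1}(r,g,s) = (s,g^{-1},r)(r,g,s) = (s, g^{-1}g, s) = (s,1,s).
\]
Applying this to the four relevant elements yields $(n,1,n)$, $(j,1,j)$, $(t,1,t)$ and $(k,1,k)$ respectively. Since two idempotents $(s,1,s)$ and $(s',1,s')$ coincide if and only if $s=s'$, the conditions on the right-hand sides above reduce to $n=j$ and $t=k$, which is exactly what the corollary asserts in parts (i) and (ii).

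There is no real obstacle here: the content is purely a substitution into Corollary~\ref{related}, together with the one-line idempotent computation above. The only thing to be careful about is checking that the $\mathcal{R}$-hypothesis of Corollary~\ref{related} is satisfied by the pairs $\{(m,a,n),(m,b,t)\}$ and $\{(i,c,j),(i,d,k)\}$, which is immediate from the characterisation $\mathcal{R}$-equivalence by equality of first coordinates recorded in the Preliminaries.
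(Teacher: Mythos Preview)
Your proof is correct and follows exactly the route the paper intends: the paper offers no separate argument but simply declares the corollary ``clear'' immediately after Corollary~\ref{related}, and your proposal just unpacks that claim by checking the $\mathcal{R}$-hypothesis, applying Corollary~\ref{related}, and reading off the idempotents $(s,1,s)$. The only cosmetic point is that you reuse the letter $t$ (already occurring in $(m,b,t)$) for the $\max$ in the Reilly product; renaming it would avoid any possible confusion.
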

 
\section{The main theorem}
  \label{Sec3}
This section is entirely devoted to proving Theorem ~\ref{bisimplew} which gives a characterisation of semigroups which have a bisimple inverse $\omega$-semigroup of left I-quotients.

\begin{theorem}\label{bisimplew}
A semigroup  $S$  is a left I-order in a bisimple inverse w-semigroup  $Q$  if and only if  $S$  satisfies the following conditions:\\
$(A)$ There is a homomorphism  $\varphi:S\longrightarrow \mathcal{B}$ such that $S\varphi$ is a left I-order in $\mathcal{B}$;\\
$(B)$ For  $x   , y , a \in S$,  
\[(i) \ l(x), l(y) \geqslant r(a)\ \mbox{and} \ xa=ya \ \mbox{implies} \ x=y, \]
\[(ii) \ r(x), r(y) \geqslant l(a)\ \mbox{and} \ ax=ay \ \mbox{implies} \ x=y. \]
$(C)$ For any $b,c\in S$, there exist \ $x, y \in S$\ such that $xb = yc$ where
\[x \in H_{r(x),r(b)-l(b)+max\big(l(b), l(c)\big)},  y \in H_{r(x),r(c)-l(c)+max\big(l(b), l(c)\big)}.\]
\end{theorem}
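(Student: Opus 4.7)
The plan is to split the argument into necessity and sufficiency, with the converse direction being the more delicate construction.

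For necessity, assume $S$ is a left I-order in $Q \cong S(G,\theta)$. Condition~(A) is immediate from the preliminaries: take $\varphi = \overline{\varphi}|_S$, where $\overline{\varphi}: Q \to \mathcal{B}$ is the natural surjection; any $(m,n) \in \mathcal{B}$ lifts to some $q \in H_{m,n}$, which can be written $q = a^{-1}b$ with $a,b \in S$, so that $(m,n) = (a\varphi)^{-1}(b\varphi)$. For condition~(B), the hypothesis $l(x), l(y) \geq r(a)$ collapses the maximum in Reilly's multiplication formula to $l(x)$ (respectively $l(y)$), so equating the three coordinates of $xa = ya$ forces $l(x) = l(y)$, and then cancellation in $G$ pins down the middle coordinate; (ii) is symmetric. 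For condition~(C), I would invoke straightness (Lemma~\ref{straightlaw}) to write $bc^{-1} \in Q$ as $u^{-1}v$ with $u,v \in S$ and $u\,\mathcal{R}\,v$. Since $uu^{-1} = e_{r(u)} = vv^{-1}$, multiplying $u^{-1}v = bc^{-1}$ on the left by $u$ gives $v = ubc^{-1}$, and multiplying further on the right by $c$ yields $vc = ube_{l(c)}$. The $\mathcal{H}$-coordinates forced by $u^{-1}v = bc^{-1}$ together with $u\,\mathcal{R}\,v$ are $l(u) = r(b)-l(b)+M$ and $l(v) = r(c)-l(c)+M$ with $M = \max(l(b),l(c))$; a direct computation then shows $ub \in H_{r(u),M}$, so $ube_{l(c)} = ub$ and hence $ub = vc$. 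The pair $x = u, y = v$ therefore has exactly the $H$-class membership prescribed in (C).

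For sufficiency, the plan is to construct a bisimple inverse $\omega$-semigroup $Q$ together with an embedding of $S$ as a left I-order, using (A) to supply the $\omega$-chain of idempotents (via $\varphi$), (B) for cancellation, and (C) as an Ore-style common-left-multiple condition. Following the template of Clifford's construction for right-cancellative monoids with (LC), I would form $Q$ as the quotient of $\Sigma = \{(a,b) \in S \times S : r(a) = r(b)\}$, with $(a,b)$ representing the formal quotient $a^{-1}b$, by an equivalence relation inspired by Ore localization; the common left multiples provided by (C) and the cancellation from (B) together make this equivalence well-defined and compatible with the natural candidate multiplication $[(a,b)][(c,d)] = [(ua,vd)]$, where $u,v \in S$ are obtained by applying (C) to the pair $(b,c)$ so that $ub = vc$. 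The specific $H$-class indices prescribed in (C) are exactly what is needed to ensure that products land in the correct $\mathcal{L}$-class. The idempotents of $Q$ will be the classes $[(a,a)]$, which under $\varphi$ form an $\omega$-chain, so $Q$ is a bisimple inverse $\omega$-semigroup; identifying $G$ with the group $\mathcal{H}$-class at some fixed level and $\theta$ as the shift induced by multiplication by the idempotent at the next level produces Reilly's presentation $Q \cong S(G,\theta)$, into which $S$ embeds (straightly, by Lemma~\ref{straightlaw}) via $s \mapsto [(a,as)]$ for a suitable left multiplier $a$.

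The main obstacle will be the careful bookkeeping through the twisted multiplication in $S(G,\theta)$ while verifying well-definedness of both the equivalence and the product. Condition~(C) has to do double duty, furnishing common left multiples for products while its $H$-class indexing simultaneously enforces the correct $\mathcal{L}$-class for products, so that the implicit action of $\theta$ is consistent across levels; condition~(B) must be invoked repeatedly to cancel group-like factors. The interplay of (B) and (C) is precisely what makes the middle (group) coordinate uniquely determined, and hence what allows $G$ and $\theta$ to be extracted from $S$ alone.
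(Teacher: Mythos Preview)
Your plan is correct and matches the paper's proof almost exactly: the same restriction $\varphi=\overline{\varphi}|_S$ for (A), the same straightness-based extraction of $x,y$ from $bc^{-1}=x^{-1}y$ for (C) (the paper cites an external lemma for $xb=yc$, but your explicit computation via $vc=ube_{l(c)}=ub$ is precisely its content), and the identical Ore-style construction of $Q$ from $\Sigma=\{(a,b):r(a)=r(b)\}$ with multiplication $[a,b][c,d]=[ua,vd]$ for the converse. The only deviation is cosmetic: the paper verifies directly that the constructed $Q$ is regular with an $\omega$-chain of commuting idempotents and is bisimple, rather than trying to identify $G$ and $\theta$ explicitly from $S$ as you suggest at the end---your phrasing there is unnecessary, since once $Q$ is shown to be a bisimple inverse $\omega$-semigroup, Reilly's theorem supplies the $S(G,\theta)$ form automatically.
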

\begin{proof}
Let $S$ be a left I-order in a bisimple inverse $\omega$-semigroup $Q$. For condition $(A)$, since $S$ is a left I-order in $Q$ and there is a homomorphism  
$\overline{\varphi}: Q \rightarrow {\mathcal{B}}$ given by
\[(m, p, n)\overline{\varphi} = (m,n),\]
we can restrict $\overline{\varphi}$ on $S$ to get a homomorphism $\varphi$ from $S$ to $\mathcal{B}$. Let
$(i, j) \in {\mathcal{B}}$, then there is an element $q$ in $Q$ such that $q \in H_{i, j}$ for some $i, j \in \mathbb{N}^0$. Put $q = a^{-1}b$ for some $a, b \in S$ with $a\,\mathcal{R}\,b$ in $Q$, so that $r(a)=r(b)$. Hence
 \[q \in H_{l(a),r(a)}H_{r(a),l(b)} \subseteq H_{l(a),l(b)},\]
 then \[\begin{array}{rcl}(i, j) &= &\big(l(a),l(b)\big)\\ & =& \big(r(a),l(a)\big)^{-1}\big(r(b),l(b)\big)\\ & =& (a\varphi)^{-1}(b\phi).\end{array}\]
 
 To see that $(B)(i)$ holds, suppose that $x ,y ,a \in S$ where $l(x),l(y) \geqslant r(a)$ and $xa = ya$. Since $a^{-1} \in H_{l(a),r(a)}$ and $xaa^{-1} = yaa^{-1}$, that is, $xe_{r(a)} = ye_{r(a)}$, and $r(a) \leqslant l(x), l(y)$, then we have $e_{l(x)}, e_{l(y)} \leqslant e_{r(a)}$. Hence $xe_{l(x)}e_{r(a)} = ye_{l(y)}e_{r(a)}$ and so  $x = xe_{l(x)} = ye_{l(y)} = y$. \par
 \medskip
 $(B)(ii)$ Similar to $(B)(i)$.\\ 
\\
Finally, we consider $(C)$. Let $b, c \in S$, then $bc^{-1} \in Q$ and
 \[bc^{-1} \in H_{r(b),l(b)}H_{l(c),r(c)} \subseteq H_{r(b)-l(b)+max\big(l(b),l(c)\big),r(c)-l(c)+max\big(l(b),l(c)\big)}.\]
 Since $S$ is a straight left I-order in $Q$, then $bc^{-1} = x^{-1}y$ where $x\,\mathcal{R}\,y$ for some $x, y \in S$, and by Lemma 2.6 in \cite{GG}, $xb = yc$. From $bc^{-1}=x^{-1}y$ we have 
  \[H_{r(b)-l(b)+max\big(l(b),l(c)\big),r(c)-l(c)+max\big(l(b),l(c)\big)} = H_{l(x), l(y)},\]
 so that $l(x) = r(b)-l(b)+max\big(l(b),l(c)\big)$ and $l(y) = r(c)-l(c)+max\big(l(b),l(c)\big)$. \\
 \\
  
  Conversely, we suppose that $S$ satisfies conditions $(A) , (B)$ and $(C)$. Now, our aim is to construct via equivalence classes of order pairs of elements of  $S$ an inverse semigroup $Q$, which is a semigroup of straight left I-quotients of  $S$. First, we let 
\[\Sigma= \{(a, b)\in S \times S: r(a) = r(b)\}\]
and on $\Sigma$ we define the relation $\sim$  as follows: 
  \[(a, b) \sim (c,d) \Leftrightarrow \ \mbox{there are elements} \ x, y\ \mbox{in}\ S\ \mbox{such that}\ xa =yc \ \mbox{and} \ xb= yd\]
  where $l(x)=r(a),l(y)= r(c)$ and $r(x) = r(y)$. Notice that if $(a, b) \sim (c,d)$, then $l(a)=l(c)$ and $l(b)=l(d)$.
  
  \begin{lemma}\label{equiv}The relation $\sim$ is an equivalence.\end{lemma}
  
 \begin{proof} It is clear that $\sim$ is symmetric. Let $(a, b) \in \Sigma$, by $(C)$ for any $a\in S$ there exist $x \in S$ with $l(x)=r(a)$, so that $\sim$ is reflexive.\par
 \medskip
  Suppose that $(a, b) \sim (c,d) \sim (p, q)$. Then there are elements $x, y, \bar{x}, \bar{y}$ in $S$ with 
   \[xa = yc\ \mbox{and}\ xb = yd,\]
   \[\bar{x}c = \bar{y}p\ \mbox{and}\ \bar{x}d = \bar{y}q,\]
   where\[r(x) = r(y), l(x)= r(a), l(y) = r(c),\]
   and 
   \[r(\bar{x}) = r(\bar{y}), l(\bar{x})= r(c), l(\bar{y})= r(p).\]
 By Condition  $(C)$,  for $y, \bar{x}$ there exist  $s, t \in S$ such that  $s\bar{x}=ty$\ where
\[s \in H_{r(s),r(\bar{x})-l(\bar{x})+max\big(l(\bar{x}),l(y)\big)}  ,  t \in H_{r(s),r(y)-l(y)+max\big(l(\bar{x}),l(y)\big)}. \] Since $l(\bar{x})=r(c)=l(y)$, then $l(s)=r(\bar{x})$ and $l(t) = r(y)=r(x)$. Now,
 \[txa = tyc = s\bar{x}c = s\bar{y}p,\]
 and 
 \[txb = tyd = s\bar{x}d = s\bar{y}q.\]
 Hence  $txa = s\bar{y}p$ and $txb = s\bar{y}q$  where  $tx \in H_{r(s), r(a)}, s\bar{y} \in H_{r(s),r(p)}$. We have
 \[l(tx) = r(a), l(s\bar{y})=r(p)\ \mbox{and}\ r(tx) = r(s\bar{y}),\]
  that is,  $(a,b)\sim (p,q)$. Thus $\sim$ is transitive.\end{proof}
  
 We write the  $\sim$-equivalence class of  $(a,b)$ as $[a, b]$ and denote by  $Q$ the set of all 
   $\sim$-equivalence classes of  $\Sigma$. If  $[a, b], [c, d] \in Q$, then by  $(C)$ for  $b$ and  $c$
   there exist  $x, y$ such that  $xb = yc$  where  \[x \in H_{r(x), r(a)-l(b)+max\big(l(b),l(c)\big)}, y \in H_{r(x), r(c)-l(c)+max\big(l(b),l(c)\big)}\] and it is easy to see that 
   \[r(xa) = r(xb) = r(yc) = r(yd)=r(x)=r(y)\]
  and we deduce that $[xa, yd] \in Q$. Define a multiplication on $Q$ by 
  \[[a, b][c,d] = [xa, yd]\ \mbox{where}\ xb = yc\]
  and  $x \in H_{r(x), r(b)-l(b)+max\big(l(b),l(c)\big)}, y \in H_{r(x), r(c)-l(c)+max\big(l(b),l(c)\big)}$.
  
 \begin{lemma}\label{well} The given multiplication is well defined.\end{lemma}
 \begin{proof} Suppose that  $[a_1, b_1] = [a_2, b_2]$ and  $[c_1, d_1] = [c_2, d_2]$. Then there are 
  elements  $x_1, x_2, y_1, y_2$ in $S$ such that
  \[\begin{array}{rcl} x_1a_1=x_2a_2,\\
x_1b_1=x_2b_2,\\ 
y_1c_1=y_2c_2, \\
 y_1d_1=y_2d_2,\end{array}\]
  where
  \[ l(x_1) = r(a_1)\ , \ l(x_2) = r(a_2)\ , \ r(x_1) = r(x_2)\]
  and 
  \[ l(y_1) = r(c_1)\ , \ l(y_2) = r(c_2)\ , \ r(y_1) = r(y_2).\]
  Note that, consequently, \[l(a_1)=l(a_2),l(b_1)=l(b_2),l(c_1)=l(c_2)\ \mbox{and}\ l(d_1)=l(d_2).\] Then
  \[[a_{1},b_{1}][c_{1},d_{1}]=[xa_{1},yd_{1}] \; \mbox{where}\;  xb_{1}=yc_{1} \]  and\ $ x \in H_{r(x),r(b_{1})-l(b_{1})+max\big(l(b_{1}),l(c_{1})\big)} \ , \ y \in H_{r(x),r(c_{1})-l(c_{1})+max\big(l(b_{1}),l(c_{1})\big)}$ \\
Also,
\[[a_2,b_2][c_2,d_2]=[\bar{x}a_2,\bar{y}d_2] \ \mbox{where}\ \bar{x}b_2=\bar{y}c_2 \] and  $\bar{x} \in H_{r(\bar{x}),r(b_{2})-l(b_{2})+max\big(l(b_{2}),l(c_{2})\big)} \ , \ \bar{y} \in H_{r(\bar{x}),r(c_{2})-l(c_2)+max\big(l(b_2),l(c_2)\big)}$. \\
\\
We have to show that  $[xa_1,yd_1]=[\bar{x}a_2,\bar{y}d_2]$.\par
\bigskip
Before completing the proof of Lemma~\ref{well} we present the following lemma.
\begin{lemma}\label{nesseccary}
 Let  $a_1  ,  a_2  , b_1  , b_2 \in S$ be such that \[r(a_1)=r(b_1), r(a_2)=r(b_2)\] and suppose that $x_1 , x_2 , w_1   , w_2 \in S$ are such that \[  x_1a_1=x_2a_2 \ , x_1b_1=x_2b_2 , \ w_1a_1=w_2a_2\] where $r(x_1)=r(x_2),l(x_1)=r(a_1),l(x_2)=r(a_2)$ and $r(w_1)=r(w_2)$. Then $w_1b_{1}=w_2b_2$. \end{lemma}
 
\begin{proof} Let  $a_1  ,  a_2  , b_1  , b_2,x_1 , x_2 , w_1  , w_2$ exist as given. Note that consequently \\ $l(a_1)=l(a_2)$ and $l(b_1)=l(b_2)$. By (C) for  $w_1 , x_1$ there exist  $x,y \in S$ such that $xw_1=yx_1$ where  \[ x \in H_{r(x),r(w_1)-l(w_1)+max\big(l(w_1),l(x_1)\big)} \ , \ y \in H_{r(x),r(x_1)-l(x_1)+max\big(l(w_1),l(x_1)\big)}.\]  Then  $xw_{1}a_1 =yx_{1}a_1$, and \[xw_2a_2=xw_1a_1=yx_1a_1=yx_2a_2.\] Now, 
 \[xw_2 \in H_{r(x),l(w_2)-l(w_1)+max\big(l(w_1),l(x_1)\big)}  , yx_2 \in H_{r(x),l(x_2)-l(x_1)+max\big(l(w_1),l(x_1)\big)}\] 
and as $l(x_1)=r(a_1)$ and $l(x_2)=r(a_2)$, we have \[l(yx_2)=r(a_2)-r(a_{1})+max\big(l(w_1),r(a_{1})\big)\geqslant r(a_2)\]and \[l(xw_2)=l(w_2)-l(w_1)+max\big(l(w_1),r(a_1)\big).\] As $xw_2a_2=yx_2a_2$, then in order to use Condition (B)($i$), we have to show that  $l(xw_2)\geqslant r(a_2)$.
Since  $w_1a_1=w_2a_2$,
\begin{equation}r(w_1)-l(w_1)+max\big(l(w_1),r(a_1)\big)=r(w_1)-l(w_2)+max\big(l(w_2),r(a_2)\big) \label{eq}\end{equation} 
 so that   \[l(w_2)-l(w_1)+max\big(l(w_1),r(a_1)\big)=max\big(l(w_2),r(a_2)\big) \geqslant r(a_2)\] as desired. Therefore by condition (B)($i$),  $xw_2=yx_2$. Since  $xw_1=yx_1$ and  $x_1b_1=x_2b_2$ we have
\[xw_1b_1=yx_1b_1=yx_2b_{2}=xw_2b_2.\]
 Once we show that $r(w_1b_1),r(w_2b_2)\geqslant l(x)$, by (B)($ii$) we have $w_1b_1=w_2b_2$. Now, 
\[w_1b_1 \in H_{r(w_1)-l(w_1)+max\big(l(w_1),r(b_{1})\big),l(b_{1})-r(b_{1})+max\big(l(w_1),r(b_{1})\big)} \]
and
 \[w_2b_2 \in H_{r(w_1)-l(w_2)+max\big(l(w_2),r(b_2)\big),l(b_1)-r(b_2)+max\big(l(w_2),r(b_2)\big)},\] 
so that
\[\begin{array}{rcl} r(w_{1}b_{1})&=&r(w_1)-l(w_1)+max\big(l(w_1),r(a_1)\big)\quad \mbox{as} \ l(x_1)=r(a_1)=r(b_1)\\ &=& r(w_1)-l(w_1)+max\big(l(w_1),l(x_{1})\big) \\ &=&l(x)\end{array}\] 
and
 \[\begin{array}{rcl}r(w_2b_2)&= &r(w_1)-l(w_2)+max\big(l(w_2),r(a_2)\big)\quad \mbox{as} \ r(b_2)=r(a_2)\\ &= &r(w_1)-l(w_1)+max\big(l(w_1),r(a_{1})\big)\quad \mbox{by} ~\eqref{eq}\\ &=&r(w_1)-l(w_1)+max(l(w_1),l(x_{1}))\quad l(x_1)=r(a_1)\\ &=&l(x).\end{array}\] The proof of the Lemma is complete.\end{proof}

Returning to the proof of Lemma ~\ref{well}, by (C) for  $xa_1$ and  $\bar{x}a_2$ there exist  $w,\bar{w}$ such that  $ wxa_1=\bar{w}\bar{x}a_2$  where \[w\in H_{r(w),r(xa_1)-l(xa_1)+max\big(l(xa_1),l(\bar{x}a_2)\big)} \ \mbox{and}\  \bar{w}\in H_{r(w),r(\bar{x}a_2)-l(\bar{x}a_2)+max\big(l(xa_1),l(\bar{x}a_2)\big)}.\]
Using the fact that $l(b_1)=l(b_2),l(c_1)=l(c_2)$ and $l(a_1)=l(a_2)$, 
it is easy to see that $l(xa_1)=l(\bar{x}a_2)$. Therefore \[l(w)=r(xa_1)=r(x)\ \mbox{and} \ l(\bar{w})=r(\bar{x}a_2) =r(\bar{x}).\] Hence
 $r(wx)=r(w)=r(\bar{w})=r(\bar{w}\bar{x})$. \par
 \medskip
  Now,  $x_1a_1=x_2a_2, x_1b_1=x_2b_2$ and $wxa_1=\bar{w}\bar{x}a_2$, so that by Lemma ~\ref{nesseccary} we have  $ wxb_1=\bar{w}\bar{x}b_2$.\\
  \\
  We also have $xb_1=yc_1$ and $ \bar{x}b_2=\bar{y}c_2$, and so  $wyc_1=\bar{w}\bar{y}c_2$. Thus  \[y_1c_1=y_2c_2  , y_1d_1=y_2d_2\ \mbox{and}\ wyc_1=\bar{w}\bar{y}c_2.\] Since $r(wy)=r(\bar{w}\bar{y})$, by using the above lemma again we have $ wyd_1=\bar{w}\bar{y}d_2$. \\
 Hence $[xa_1,yd_1]=[\bar{x}a_2,\bar{y}d_2]$. This completes the proof of Lemma ~\ref{well}. \end{proof}
The next lemma is useful in verifying that the given multiplication is associative. The proof follows immediately from the fact that $l(ab)\geqslant l(b)$ , $l(de)\geqslant l(e)$, and (B)($i$).
\begin{lemma}\label{subsidiarylemma}
Let  $a , b , c ,d , e  \in S$. If  $abc=dec$ and  $l(b)\geqslant r(c), l(e)\geqslant r(c)$, then  $ab=de$.\end{lemma}

\begin{lemma}\label{assocclaw} 
The given multiplication is associative. \end{lemma}

\begin{proof} Let $[a,b],[c,d],[p,q] \in Q$. Then by using the definition of multiplication in $Q$ we have
\[ \big([a,b][c,d]\big)[p,q]=[xa,yd][p,q] \ \mbox{where}\ xb=yc\]
and  $x \in H_{r(x),r(b)-l(b)+max\big(l(b),l(c)\big)} ,  y \in H_{r(x),r(c)-l(c)+max\big(l(b),l(c)\big)}$ for some $x,y \in S$ and then
\[\big([a,b][c,d]\big)[p,q]=[wxa,\bar{w}q] \ \mbox{where} \ wyd=\bar{w}p\]
and  $w \in H_{r(w),r(yd)-l(yd)+max\big(l(yd),l(p)\big)},
\bar{w} \in H_{r(w),r(p)-l(p)+max\big(l(yd),l(p)\big)}$ for some $w,\bar{w}\in S$. Similarly,
\[ [a,b]\big([c,d][p,q]\big)=[a,b][\bar{x}c,\bar{y}q] \ \mbox{where}\ \bar{x}d=\bar{y}p\]
and  $\bar{x} \in H_{r(\bar{x}),r(d)-l(d)+max\big(l(d),l(p)\big)} ,  \bar{y} \in H_{r(\bar{x}),r(p)-l(p)+max\big(l(d),l(p)\big)}$, and then 
\[ [a,b]\big([c,d][p,q]\big)=[za,\bar{z}\bar{y}q] \ \mbox{where} \ zb=\bar{z}\bar{x}c\]
and $z \in H_{r(z),r(b)-l(b)+max\big(l(b),l(\bar{x}c)\big)}$ and
$\bar{z} \in H_{r(z),r(\bar{x}c)-l(\bar{x}c)+max\big(l(b),l(\bar{x}c)\big)}$.\\
\\
To complete our proof we have to show that  $[wxa,\bar{w}q]=[za,\bar{z}\bar{y}q]$. That is, we need to show that there are $t,h\in S$ such that $twxa=hza$ and $t\bar{w}q=h\bar{z}\bar{y}q$ with  
\[r(t)=r(h),l(t)=r(wxa)\ \mbox{and}\ l(h)=r(za).\]
By Condition (C) for  $wx,  z$ there exist  $ h , t \in S$ such that  $twx=hz$ where \[t\in H_{r(t),r(wx)-l(wx)+max\big(l(wx),l(z)\big)} , h\in H_{r(t),r(z)-l(z)+max\big(l(wx),l(z)\big)},\] and so $twxa=hza$ and $twxb=hzb$. Since  $xb=yc$ and $zb=\bar{z}\bar{x}c$ we have  $twyc=h\bar{z}\bar{x}c$. But
\[l(y)= r(c)-l(c)+max\big(l(b),l(c)\big)\geqslant r(c)\]and\[ l(\bar{x})=r(d)-l(d)+max\big(l(d),l(p)\big)= r(c)-l(d)+max\big(l(d),l(p)\big)\geqslant r(c).\] By Lemma ~\ref{subsidiarylemma} we have  $twy=h\bar{z}\bar{x}$ and so  $twyd=h\bar{z}\bar{x}d$. Now,  $wyd=\bar{w}p$ and  $\bar{x}d=\bar{y}p$, so that $t\bar{w}p=h\bar{z}\bar{y}p$. But
  \[l(\bar{w})= r(p)-l(p)+max\big(l(yd),l(p)\big) \geqslant r(p)\] and \[l(\bar{y})=r(p)-l(p)+max\big(l(d),l(p)\big)\geqslant r(p),\]so that by Lemma~\ref{subsidiarylemma}, $t\bar{w}=h\bar{z}\bar{y}$. Hence  $t\bar{w}q=h\bar{z}\bar{y}q$. It remains to prove that
 \[l(t)=r(wxa)\ \mbox{and}\ l(h)=r(za).\] 
Since \[l(t)=r(wx)-l(wx)+max\big(l(wx),l(z)\big)\] and \[l(h)=r(z)-l(z)+max\big(l(wx),l(z)\big).\] Calculating, we have 
 \begin{align} 
r(wx)&=r(w)\label{1}\\
l(wx)&=l(x)-l(yd)+max\big(l(p),l(yd)\big)\label{2}\\
l(z)&=r(b)-l(b)+max\big(l(b),l(\bar{x}c)\big)\label{3}
\end{align}
\begin{align}
\mbox{and} \ l(\bar{x}c)&=l(c)-l(d)+max\big(l(d),l(p)\big)\label{4}\\
l(yd)&=l(d)-l(c)+max\big(l(b),l(c)\big). \label{5} 
\end{align}
Since $r(wx)=r(wxa)$ and $r(z)=r(za)$, once we show that $l(z)=l(wx)$, we will have \[l(t)=r(wx)=r(wxa)\ \mbox{and} \ l(h)=r(z)=r(za).\] 
It is convenient to consider separately two cases. \\
\\
\underline{Case($i$): $l(c)\geqslant l(b)$.} We have $l(yd)=l(d)$ and $l(x)=r(b)-l(b)+l(c)$. If $l(d)\geqslant l(p)$, then from ~\eqref{4} we have $l(\bar{x}c)=l(c)$. From ~\eqref{2} and ~\eqref{3}, 
\[l(wx)=l(x)=r(b)-l(b)+l(c)=l(z).\]
If, on the other hand, $l(d)\leqslant l(p)$, then $l(\bar{x}c)=l(c)-l(d)+l(p)$. From ~\eqref{2} and ~\eqref{3}, 
\[l(wx)=l(x)-l(d)+l(p)\] 
and
\[l(z)=r(b)-l(b)+max\big(l(b),l(c)-l(d)+l(p)\big).\]
Since $l(c)\geqslant l(b)$ and $l(d)\leqslant l(p)$, then $l(c)-l(d)+l(p)\geqslant l(b)$. Thus

\[\begin{array}{rcl}l(z)&=&r(b)-l(b)+l(c)-l(d)+l(p)\\ &=&l(x)-l(d)+l(p)\\ &=&l(wx). \end{array}\]

\underline{Case($ii$): $l(c)\leqslant l(b)$.} We have $l(yd)=l(d)-l(c)+l(b)$ and $l(x)=r(b)$. If $l(d)\geqslant l(p)$, then $l(\bar{x}c)=l(c)$. From ~\eqref{2} and ~\eqref{3}, 
\[l(wx)=l(x)-l(d)+l(c)-l(b)+max\big(l(p),l(d)-l(c)+l(b)\big)\]
and
\[l(z)=r(b)-l(b)+max\big(l(b),l(c)\big)=r(b)=l(x).\]
Since $l(d)\geqslant l(p)$ and $l(c)\leqslant l(b)$ we have $l(d)-l(c)+l(b)\geqslant l(d)\geqslant l(p)$. Then $l(wx)=l(x)$. Hence $l(z)=l(x)=l(wx)$. \\
\\
If, on the other hand, $l(d)\leqslant l(p)$, then from ~\eqref{4} we have $l(\bar{x}c)=l(c)-l(d)+l(p)$. From ~\eqref{2} and ~\eqref{3}, 
\[l(wx)=l(x)-l(d)+l(c)-l(b)+max\big(l(p),l(d)-l(c)+l(b)\big)\]
and
\[l(z)=r(b)-l(b)+max\big(l(b),l(c)-l(d)+l(p)\big).\]
Once again, there are two cases. If $l(c)-l(d)+l(p)\geqslant l(b)$, then \[l(p)\geqslant l(d)-l(c)+l(b)\] and so 
\[\begin{array}{rcl}
l(wx)&=&l(x)-l(d)+l(c)-l(b)+l(p)\\ &=&r(b)-l(d)+l(c)-l(b)+l(p)\\ &=&l(z).\end{array}\]
If, on the other hand, $l(c)-l(d)+l(p)\leqslant l(b)$, then $l(p)\leqslant l(d)-l(c)+l(b)$. Hence \[l(wx)=l(x)=r(b)=l(z).\] This completes the proof of the lemma.
\end{proof}

Now we aim to show that $Q$, which we have constructed, is a semigroup of left I-quotients of $S$. First we show that $S$ is embedded in $Q$.\par
\bigskip
Let $a\in S$. Then $a \in H_{r(a),l(a)}$ and as seen earlier, there exist $x\in S$ with  $l(x)=r(a)$. Then $xa \in H_{r(x),l(a)}$ and $[x,xa]\in Q$. If $y \in S$ with $l(y)=r(a)$, then  $ya \in H_{r(y),l(a)}$ and again $[y,ya]\in Q$. By (C) there exist  $s,t \in S$   with  $sx=ty$ (and so  $sxa=tya$),  where   $ s \in H_{r(s),r(x)} , t \in H_{r(s),r(y)}$. Hence  $[x,xa]=[y,ya]$.  There is therefore a well-defined mapping  $\theta:S \longrightarrow Q$ defined by  $a\theta=[x,xa]$ where  $x\in H_{r(x),r(a)}$.  
 \begin{lemma}\label{embed} 
The semigroup $S$ is embedded in  $Q$.\end{lemma}
 \begin{proof}
Suppose that $a\theta=b\theta$, that is, $[x,xa]=[y,yb]$ where  $ x \in H_{r(x),r(a)}$ and $y \in H_{r(y),r(b)}$, then there exist  $s,t \in S$ such that  $sx=ty$ and  $ sxa=tyb$ where   $l(s)=r(x),  l(t)=r(y)$ and  $r(s)=r(t)$. We claim that  $a=b$.\par
 \bigskip
 Since  $sxa=tyb=sxb$, once we show that $r(a),r(b)\geqslant l(sx)$ we can use (B)($ii$) to get $a=b$. Now, it is easy to see that
 \[sx\in H_{r(s),r(a)}\ \mbox{and} \ ty\in H_{r(s),r(b)}\]
  and so $l(sx)=r(a)$ and $l(ty)=r(b)$. But  $sx=ty$, so that  $r(a)=r(b)=l(sx)$. Hence  $a=b$ and so $\theta$ is 1-1, our claim is established. \par
 \medskip 
  To show  that  $\theta$  is a homomorphism, let  $a\theta=[x,xa]$ and $b\theta=[y,yb]$ where $x\in H_{r(x),r(a)}$ and $y\in H_{r(y),r(b)}$. Then 
 \[ a\theta b\theta=[x,xa][y,yb]=[wx,\bar{w}yb]\; \mbox{where} \; wxa=\bar{w}y \]  and  $w \in H_{r(w),r(xa)-l(xa)+max\big(l(xa),l(y)\big)}  , \bar{w} \in H_{r(w),r(y)-l(y)+max\big(l(xa),l(y)\big)}$. Hence \[a\theta b\theta=[wx,wxab].\]
  Notice that  \[r(xa)=r(x),l(xa)=l(a)\ \mbox{and}\ l(y)=r(b)\] so that  $w \in H_{r(w),r(x)-l(a)+max\big(l(a),r(b)\big)}$. Then  \[wx\in H_{r(w),r(a)-l(a)+max\big(l(a),r(b)\big)}=H_{r(w),r(ab)}. \]
 It follows that $(ab)\theta=[wx,wxab]=a\theta b\theta$.
  \end{proof}

The main purpose of the following is to show that $Q$ is a bisimple inverse $\omega$-semigroup and $S$ is a left I-order in $Q$. First we need the following simple but useful lemma.
\begin{lemma}\label{special}
 Let $[a,b] \in Q$. Then $[a,b]=[xa,xb]$ for any  $x\in S$ with  $l(x)=r(a)$. \end{lemma}
  \begin{proof} It is clear that $r(xa)=r(x)=r(xb)$, so that $[xa,xb] \in Q$.
 By (C) for  $a$ and  $xa$    there exist  $t ,  z \in S$ such that  $ta=zxa$ where  \[t \in H_{r(t),r(a)-l(a)+max\big(l(a),l(xa)\big)} ,  z \in H_{r(t),r(xa)-l(xa)+max\big(l(a),l(xa)\big)}.\]Since  $l(xa)=l(a)$ and $r(xa)=r(x)$, we have $l(t)=r(a)$ and  $l(z)=r(xa)= r(x)$. Also,  $l(zx)=r(a)$. Hence by (B)($ii$),  $t=zx$ and so  $tb=zxb$. Thus \[[a,b]=[xa,xb].\] \end{proof}  

\begin{lemma}\label{mm}Let $[a,b],[b,c] \in Q$. Then \[[a,b][b,c]=[a,c].\]\end{lemma}
\begin{proof}
We have \[[a,b][b,c]=[xa,yc]\] where $xb=yb$ and $x,y \in H_{r(x),r(b)-l(b)+max\big(l(b),l(b)\big)}$ so that $x,y \in H_{r(x),r(b)}.$ By (B)($i$), $x=y$. Then by Lemma ~\ref{special}, $[xa,xc]=[a,c]$.
\end{proof}

\begin{lemma}\label{regular}
The semigroup $Q$ is  regular.\end{lemma}
\begin{proof} 
Let $[a,b]\in Q$. Then  $[b,a]\in Q$ and by Lemma ~\ref{mm}, \[[a,b][b,a][a,b]=[a,b].\]
\end{proof}
Let $[a,a] \in Q$, then  by Lemma~\ref{mm} we have  $[a,a][a,a]=[a,a]$, that is, $[a,a]$\ is an idempotent in \ $Q$. Hence $\{[a,a]; a \in S\}\subseteq E(Q)$. 
\begin{lemma}\label{idempotentsform}
The set of idempotents of $Q$ is given by $E(Q)=\{[a,a]; a \in S\}$. \end{lemma}  
\begin{proof}
Let $[a,b] \in E(Q)$, then  $[a,b][a,b]=[a,b]$ and so $[xa,yb]=[a,b]$ where  $xb=ya$  for some  $x \in H_{r(x),r(b)-l(b)+max\big(l(b),l(a)\big)}  , y \in H_{r(x),r(a)-l(a)+max\big(l(b),l(a)\big)}$  so that \[xa \in H_{r(x),l(a)-l(b)+max\big(l(b),l(a)\big)}  , yb \in H_{r(x),l(b)-l(a)+max\big(l(b),l(a)\big)}. \]
Since  $[xa,yb]=[a,b]$, then there exist $t,z \in S$  such that  $ txa=za$ and $tyb=zb$ where $t \in H_{r(t),r(x)}$ and  $z \in H_{r(t),r(a)}$. It follows that $l(xa)=l(a)$ and $l(yb)=l(b)$. Hence  \[tx \in H_{r(t),r(b)-l(b)+max\big(l(a),l(b)\big)}  , ty \in H_{r(t),r(a)-l(a)+max\big(l(b),l(a)\big)}, \] 
so that  \[l(tx)\geqslant r(b)=r(a), l(ty)\geqslant r(a)=r(b)\ \mbox{and} \  l(z)=r(a)=r(b).\] By (B)($i$),  $ tx=z=ty$. From (B)($ii$) as  $ r(x)=r(y)=l(t)$, we have  $x=y$, and so   $l(x)=l(y)$, that is, $r(a)-l(b)+max\big(l(b),l(a)\big)=r(a)-l(a)+max\big(l(b),l(a)\big)$. Hence  $l(a)=l(b)$ which gives  $l(x)=r(a)=r(b)$. Since  $xb=ya=xa$ by (B)($ii$)  $a=b$.\end{proof}

 \begin{lemma}\label{chain}
The set $E(Q)$\ is w-chain. \end{lemma}
\begin{proof}
Let $[a,a]  ,[b,b] \in E(Q)$, then \[[a,a][b,b]=[xa,yb] \ \mbox{where} \  xa=yb,\]  and  $x\in H_{r(x),r(a)-l(a)+max(l(a),l(b))} \ , \ y \in H_{r(x),r(b)-l(b)+max(l(a),l(b))}.$ Hence \[[a,a][b,b]=[xa,xa]=[yb,yb].\]
If  $l(a) \geqslant l(b)$, then  $x\in H_{r(x),r(a)}$  and so  $xa\in H _{r(x),l(a)}$. By Lemma ~\ref{special} we have  $[xa,xa]=[a,a]$. If  $l(b)\geqslant l(a)$, then  $y\in H_{r(x),r(b)}$  and  $yb \in H_{r(x),l(b)}$  so that $[yb,yb]=[b,b]$ by Lemma ~\ref{special}. 
\end{proof}
Notice also from Lemma ~\ref{chain} that if $l(a)=l(b)$, then $[a,a][b,b]=[a,a]=[b,b]$.\par
\medskip 
   By Lemma ~\ref{chain}, the idempotents of $Q$ form an $\omega$-chain and hence commute, by Lemma ~\ref{regular}; the following Lemma is clear.
  \begin{lemma}\label{in} The semigroup $Q$ is inverse. \end{lemma}
   \begin{lemma}\label{bi} The semigroup  $Q$ is a bisimple inverse semigroup.\end{lemma}
  \begin{proof} 
To show that  $Q$   is a bisimple inverse semigroup, we need to prove that, for any two idempotents   $[a,a] , [b,b]$  in  $E(Q)$, there is   $q$ in  $Q$ such that  $qq^{-1}=[a,a]$ and  $q^{-1}q=[b,b]$.\\ 
\\
By (A),  $S\varphi$ is a  left I-order in  $\mathcal{B}$. By Lemma ~\ref{straightlaw}, $S\varphi$ is straight, so that for  $(l(a),l(b))$ there exist  $c , d$ in  $S$ such that  \[(l(a),l(b))=c\varphi^{-1}d\varphi\; \mbox{where} \; c\varphi\,\mathcal{R}\,d\varphi  \mbox{ in} \ \mathcal{B}, \] so that  $c\varphi=(u,l(a))$ and  $d\varphi=(u,l(b))$ for some  $u\in \mathbb{N}^0$. Hence   $q=[c,d]\in Q$. By Lemma ~\ref{mm},  $qq^{-1}=[c,d][d,c]=[c,c]$ and, similarly, $q^{-1}q= [d,d]$. By the argument following Lemma ~\ref{chain}, $[c,c]=[a,a]$ and $[d,d]=[b,b]$, as required.  
\end{proof}

  The following lemma throws full light on the relationship between $S$ and $Q$.
 \begin{lemma}\label{liorder}
Every element of $Q$ can be written as $(a\theta)^{-1}b\theta$, where $a,b \in S$.\end{lemma}

\begin{proof}
Suppose that  $q=[a,b] \in Q$. In view of Lemma~\ref{embed}  $a\theta=[x,xa]$ and $b\theta=[y,yb]$ respectively, for some  $x\in H_{r(x),r(a)}$ and\ $y\in H_{r(y),r(b)}$. Hence 
\[\begin{array}{rcl} (a\theta)^{-1}b\theta&=&[xa,x][y,yb]\\
&=&[txa,hyb] \quad \mbox{where}\ tx=hy, r(t)=r(h), l(t)=r(x) \ \mbox{and}\ l(h)=r(y) \\ &=&[txa,txb] \quad \mbox{where} \ l(tx)=r(a)\\ &=&[a,b] \quad \mbox{by Lemma ~\ref{special}}.\end{array}\] 
 \end{proof}

 From Lemmas  ~\ref{embed},  ~\ref{chain}, ~\ref{in}, ~\ref{bi} and ~\ref{liorder} we deduce that  $S$ is a straight left I-order in a bisimple inverse  $\omega$-semigroup.\end{proof}

\end{document}